\numberwithin{equation}{section}
\theoremstyle{plain}
\newtheorem{prop}{Proposition}[section]
\newtheorem{Theo}[prop]{Theorem}
\newtheorem{claim}[prop]{Claim}
\theoremstyle{definition}
\newtheorem{defi}[prop]{Definition}
\numberwithin{equation}{section}
\newcommand{\C}{\mathbb C}
\newcommand{\N}{\mathbb N}
\newcommand{\Z}{\mathbb Z}
\begin{document}
\author{Veronica Kalicki, Juan Morales, Ryan Ostrander}
\date{\today}
\title{Visualization of Abel's Impossibility Theorem}
\begin{abstract} In this paper we construct a visualization of the Abel's Impossibility Theorem also known as the Abel-Ruffini Theorem. Using the canvas object in JavaScript along with the p5.js library, and given any expression that uses analytic functions and radicals one can always construct closed paths such that the expression evaluated at the coefficients of a general polynomial returns to it's initial position, while the roots of the polynomial undergo a non-trivial permutation. Hence, such expression does not reconstruct the roots from the coefficients. Using the visualization we begin by considering the necessity of radicals to solve second degree polynomial equations and build towards degree five polynomial equations. In eventuality our program shows that there is no formula for an arbitrary fifth degree polynomial equation that uses analytic functions, finite field operations, and radicals that reconstructs the roots of the polynomial from it's coefficients. This theorem was partially completed by Paolo Ruffini in 1799 and completed by Niels Abel in 1824.
\end{abstract}

\maketitle
\tableofcontents
\newpage

\section{Introduction} 

 For a very long time, mathematicians attempted to find an expression for the roots of a fifth degree polynomial in terms of it’s coefficients and using radicals and finite field operations without success. That is until the discovery of the proof of the following theorem in 1824 by the Norwegian mathematician Niels Henrik Abel. The results in this paper are a computer representation this theorem. Using JavaScript, a computer simulation was made in order to demonstrate Abel's Theorem \cite{ostrander_morales_kalicki_2019}. 
Given the roots of an arbitrary monic polynomial we can find the corresponding coefficients using \textit{Vieta's} formulae for the cases of a second, third, fourth, and fifth degree polynomial equation. This was made to graphically express a mathematical result. The motivation behind the following project is based on the fact that computers can be used as mathematical aids and therefore we want to visualize and illustrate the theorem in question. 

In \cref{sec:idea} we discuss the main idea behind \textit{Arnold's} proof of \textit{Abel's} theorem, in \cref{sec:applyjava} we develop the general intuition and motivation for the program, in \cref{sec:quadratic} we illustrate the need for multi-valued functions in order to solve quadratic equations, in \cref{sec:commutator} we address the properties of radical functions, non-trivial permutations of the roots of a general polynomial, and the image of closed paths under radicals. In \cref{sec:cubic} we discuss the case of a third degree polynomial equation, in \cref{sec:symmetricgroup} we briefly list important properties of group theory, the symmetric and alternating groups, and their relevance. In \cref{sec:quartic} we visually illustrate the case for a four degree polynomial equation, and in \cref{sec:main} we state the main theorem. Lastly, in \cref{sec:impossibility} we discuss the main claim of this paper and in \cref{sec:javascript} we discuss the basic construction of the program the libraries used, and the overall implementation.

\begin{Theo}[Abel's Theorem]
The general algebraic equation with one unknown of degree
greater than 4 is insoluble in radicals, i.e. there does not exist a formula, which expresses the roots
of a general equation of degree greater than four in terms of the coefficients involving the operations
of addition, subtraction, multiplication, division, raising to a natural degree, and extraction of roots
of natural degree.\cite{alekseev2004abel}
\end{Theo}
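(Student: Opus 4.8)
The plan is to follow \emph{Arnold's} topological argument, whose ingredients were assembled in \cref{sec:idea}, \cref{sec:commutator} and \cref{sec:symmetricgroup}. Regard the coefficients $(a_1,\dots,a_n)$ of the monic polynomial $z^n + a_1 z^{n-1} + \dots + a_n$ as coordinates on $\C^n$, let $\Sigma \subset \C^n$ be the discriminant hypersurface (the locus where the polynomial has a repeated root), and set $U := \C^n \setminus \Sigma$. Over $U$ the $n$ roots depend locally holomorphically on the coefficients, so they define an $n$-valued algebraic function $z(a_1,\dots,a_n)$ whose \emph{monodromy} --- the action on a fibre obtained by analytic continuation around loops --- is a homomorphism $\rho \colon \pi_1(U,\ast) \to S_n$. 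A putative formula in radicals would be an $\ell$-valued function built from $(a_1,\dots,a_n)$ by finitely many arithmetic operations, compositions with single-valued analytic functions, and extractions of roots of natural degree, and it would have to coincide with $z$; so it suffices to derive a contradiction for $n \ge 5$ from the monodromy properties of such ``radical'' functions.

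First I would record that $\rho$ is surjective, i.e.\ the monodromy group of the general degree-$n$ polynomial is all of $S_n$. The quickest route is to identify $U$ with the space $\cC_n$ of unordered $n$-tuples of distinct points in $\C$ (a monic squarefree polynomial being determined by its set of roots): a loop in $\cC_n$ that drags one point once around another and back realizes the corresponding transposition of the fibre, and adjacent transpositions generate $S_n$. This is exactly the phenomenon visualized in \cref{sec:quadratic} and extended to higher degree in \cref{sec:cubic} and \cref{sec:quartic}; hence $\rho$ is onto $S_n$.

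Next comes the heart of the argument, the ``commutator'' mechanism of \cref{sec:commutator}. Write the derived series of $\pi_1(U,\ast)$ as $\pi_1^{(0)} = \pi_1(U,\ast)$ and $\pi_1^{(i+1)} = [\pi_1^{(i)},\pi_1^{(i)}]$. I claim, by induction on the depth $d$ of the radical tower defining a function $F$, that $F$ returns to its initial value along every loop lying in $\pi_1^{(d)}$. For $d=0$ the function $F$ is a composition of arithmetic operations with single-valued analytic functions, hence itself single-valued, and returns to its value along \emph{every} loop. For the inductive step, consider the outermost radical, say $F$ with $F^m = G$ where $G$ has depth $\le d-1$, so by hypothesis $G$ is unchanged along any loop in $\pi_1^{(d-1)}$. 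Along such a loop $a$, continuation sends $F \mapsto \zeta_a F$ for some $m$-th root of unity $\zeta_a$ (since $F^m = G$ is unchanged), and multiplication by one constant root of unity commutes with multiplication by another. Hence along a commutator $[a,b] = aba^{-1}b^{-1}$ with $a,b \in \pi_1^{(d-1)}$ the net effect on $F$ is multiplication by $\zeta_a\zeta_b\zeta_a^{-1}\zeta_b^{-1} = 1$, i.e.\ $F$ is unchanged; as $\pi_1^{(d)}$ is generated by such commutators, the induction closes. For a formula containing several nested and juxtaposed radicals one takes $d$ to be the total radical depth and iterates the observation over all the radicals occurring, noting that arithmetic operations and compositions with analytic functions never re-introduce monodromy.

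Finally I would combine the two facts. By the structure of $S_n$ recalled in \cref{sec:symmetricgroup}, for $n \ge 5$ one has $[S_n,S_n] = A_n$ and $[A_n,A_n] = A_n$, so every term of the derived series of $S_n$ contains the nontrivial group $A_n$. Pulling back through the surjection $\rho$, every derived subgroup $\pi_1^{(d)}$ contains a loop $\gamma$ with $\rho(\gamma) \neq \mathrm{id}$, i.e.\ a closed path along which the roots of the general polynomial undergo a genuine, nontrivial permutation. If the roots were expressed by a radical formula $F$ of some finite depth $d$, the previous paragraph would force $F$, and hence the roots, to return unchanged along this very $\gamma$ --- a contradiction. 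Therefore no such formula exists for $n \ge 5$, which is \emph{Abel's Theorem}; see \cite{alekseev2004abel} for the argument in full detail. The main obstacle is the inductive step above: one must delimit the class of ``functions representable by radicals'' carefully enough --- tracking the branch loci of all the radicals and analytic functions occurring, the basepoint, and their interaction --- that the clean cancellation $\zeta_a\zeta_b\zeta_a^{-1}\zeta_b^{-1}=1$ genuinely applies along loops avoiding every relevant singularity.
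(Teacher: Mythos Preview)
Your proposal is correct and follows essentially the same route as the paper: Arnold's topological argument via monodromy, the commutator mechanism on nested radicals, and the non-solvability of $S_n$ for $n\ge 5$. The only difference is packaging --- you phrase the induction in terms of the derived series $\pi_1^{(d)}$ of the fundamental group of the complement of the discriminant, whereas the paper works more concretely with explicit commutators of root-permuting paths and Vieta's formulae, but the substance is identical.
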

Vladimir Igorevich Arnold of Moscow University based this proof on a series of lectures to undergraduate students developed through a series of problems given out to students starting with concepts from Gorup theory and building up to Monodromy groups.  
\section{Sketch of Arnold's Proof}\label{sec:idea}
We begin by introducing the fundamental idea behind \textit{Arnold's} proof, \textit{Riemann Surfaces} of the function $w=\sqrt[n]{z}$. The idea is to assign to each multi-valued function of a complex variable a Galois group so that it can be shown that the designated Galois group which expresses the roots of a given equation in terms of a parameter $z$ is not the Galois group for a function expressed in radicals. Thus in the spirit of illustrating the flexibility and importance of this proof by \textit{Arnold} we build the required intuition that leads to the proof of \textit{Abel's} theorem. 

\flushleft{\textbf{Riemann Surfaces for the functions $w=\sqrt[n]{z}$}.}

\
\\
We begin by considering multivalued functions and the construction of their Riemann surfaces. Consider the multivalued function $w=\sqrt{z}$. On the $z-plane$ we cut the negative part of the real axis starting at the origin to $-\infty$ such that for all $z\in \C -\{(-\infty,0)\}$ and all values of $w=\sqrt{z}$ which lie on the right-half plane defines a continuous single valued function on cut plane given by $w_1=\sqrt{z}$. Similarly we may choose values of $z$ that do not lie on the cut and the values of $w=\sqrt{z}$ that lie on the left half of the complex plane to obtain a continuous single valued function on the cut plane given by $w_2=\sqrt{z}$. Making two copies of the $z-plane$ and making the same cut as before we get what the author calls sheets where we define on each sheet the functions $w_1$ and $w_2$, respectively. If we then glue the cuts of each sheet in a unique manner we end up with what is called as the Riemann surface of the function $w=\sqrt{z}$. However, a more intuitive approach to understand this construction is to think about the function $w=\sqrt{z}$ not defined on the complex plane, but rather on an multi-level parking garage with a continuous ramp. Now suppose we move around the origin in the counterclockwise direction $n$ times. Then $z^t \mapsto z^{2\pi i nt} $, for $t\in \C$, and if $t$ is a rational number of the form $m/n$ where $m,n$ are co-primes then $e^{2\pi i nt}=e^{2\pi im}=1$. This means that as we wind about the origin $n$ times $z^t$ returns to it's original value, and in terms of our multi-level parking garage this means that as we enter the first level of the parking garage (the $1^{st}$ Riemann sheet) and as we wind around the origin we ``move up'' to the second, third,\ldots, $n$th level of the parking garage, but at the $(n+1)$th level we end up back at the first level.

\vspace{0.5em}

This process can be extended to any function of the form $w=\sqrt[n]{z}$ by making non-intersecting cuts from all branch points to infinity and gluing in a specific manner along the cuts, giving rise to single-valued continuous branches of the function $w=\sqrt[n]{z}$. More generally the types of functions that satisfy the above property follows from another property called the monodromy-property.                

We now want to assign a certain permutation group to each Riemann surface by letting $g_1,g_2,\ldots,g_s$ be the permutations of a certain Riemann surface corresponding to counterclockwise closed paths around all the branch points of the given function. Then the subgroup generated by the elements $\{g_1,\ldots, g_s\}$ is called the permutation group of the sheets for the given Riemann surface.
\
\\
Although, some of the arguments given by \textit{Arnold} require a general understanding of \textit{Galois} theory, we omit such explanations as they do not built any intuition of the topological and geometric processes that occur in our visualization. However, the final step of this proof sketch comes from the following: 
Consider the equation
\begin{equation}\label{eqn}
3w^5-25w^3+60w-z=0,
\end{equation} 
 where $z$ is a parameter such that for each complex value of $z$ we find all the complex roots of $w$ of the above equation \cite{alekseev2004abel}. Thus, if the values of $w(z)$ that express the roots of equation (\ref{eqn}) in terms of $z$ are parts of the values of the function $w_1(z)$, that can be expressed in radicals, then the Riemann surface for the function $w(z)$ is isolated from the Riemann surface of the function $w_1(z)$. Further, if $G$ is the Galois group of the function $w_1(z)$ then for every permutation of $G$ there corresponds a permutation of the five sheets of the Riemann surface of the function $w(z)$, which defines a group homomorphism $\phi:G\rightarrow S_5$. Since $S_5$ is not soluble, then $G$ is not soluble, but a function that is expressible by radicals has soluble Galois group, which leads to a contradiction. 
\section{Visualization}\label{sec:applyjava} 

Using JavaScript we developed an animated webpage application that allows users to visually understand the main argument of \textit{Arnold’s} proof. Specifically, it shows that given any expression $f : \{a_0, . . . , a_4\}\rightarrow \C^5$ that uses analytic functions and radicals, one can construct a closed path in the space $Poly_5(C)$ of Monic fifth degree polynomials, such that all values of $f$ return to their original positions, while the roots $z_1, . . . , z_5$ undergo a non-trivial permutation; therefore such $f$ cannot reconstruct the roots $z_1, . . . , z_5$ from the coefficients $a_0,...,a_4$. 

\begin{defi}
Recall that for any non-zero $z \in \C$ and $n\in \N$ there are precisely $n$ complex numbers $w$ with $w^n=z$.
\begin{equation}\label{polar}
z=r\cdot e^{i\theta},\qquad w=\sqrt[n]{r}\cdot e^{\frac{i}{n}(\theta+2k\pi)}\qquad k=0,1,\ldots ,n-1.
\end{equation}
Let $\gamma:[a,b]\rightarrow \C\setminus \{0\}$ be a closed path starting and ending at $z$. 
Then there are precisely $n$ paths $\omega_k:[a,b]\rightarrow \C\setminus \{0\}$ that trace the $n$th roots of $\gamma(t)$:
$$w_k(t)^n=\gamma(t)\qquad t\in [a,b],\quad k=0,1,\dots,n-1.$$
Note that while $\gamma$ is closed $\gamma(a)=z=\gamma(b)$ the paths $\omega_k$ need not be closed, yet the map
$$\omega_0(a), \dots,\omega_{n-1}(a)\ \mapsto\  \omega_0(b),\dots,\omega_{n-1}(b)$$
is always a cyclic permutation of the $n$ roots of the base point $z$.
\end{defi}

\subsection{Necessity of Radicals For Solving Quadratic Expressions}\label{sec:quadratic}
\
\\
We begin by considering the case for a \textit{Monic} polynomial $p(z) \in {\rm Poly}_2(\mathbb{C})$\footnote{Screen shot of the JavaScript simulation for the case of a quadratic polynomial (\figref{quad})}. The visualization specifically allows the user to understand the underlying ideas of why we need a multi-valued function, such as the radical function, to solve degree two polynomial equations. Explicitly, we induce closed paths on the roots of the polynomial equation, which in turn induces closed paths on the coefficients of the polynomial, which are symmetric expressions in terms of the roots. Hence, for any hypothetical expression that claims to express the roots of a general degree two polynomial equation in terms of the coefficients of the polynomial, analytic functions, and finite field operations, but no radicals, the visualization explicitly shows that this hypothetical expression does not trace the paths of the roots. 
\begin{figure}[htp]
	\centering
	\includegraphics[scale=0.5]{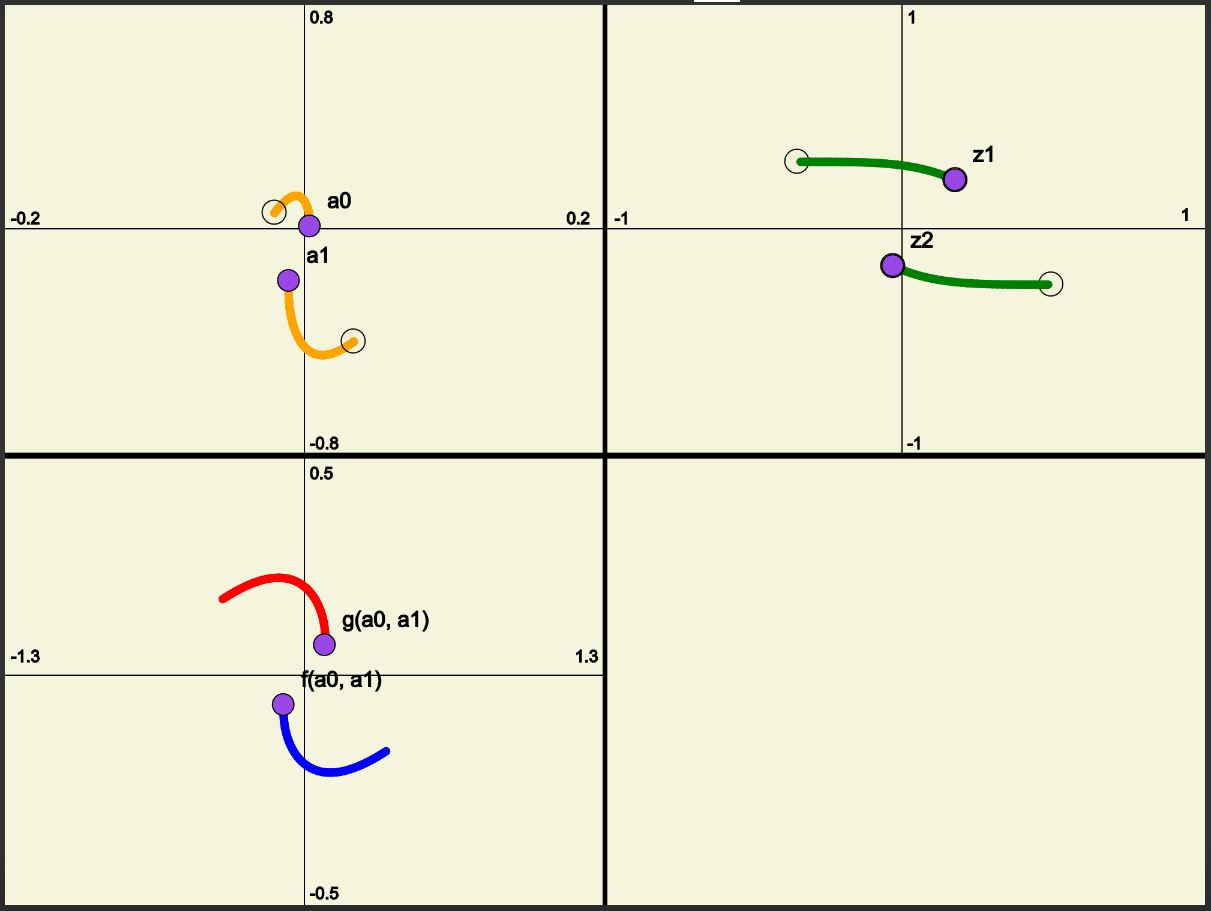}
	\caption[Quadratic]{Case for Monic $p(z)$ in ${\rm Poly}_2(\C)$}
	\label{fig:quad}
	\end{figure}

Formally we show that there is no formula for the roots $z_1, z_2$ of a general Monic polynomial $p(z)\in {\rm Poly}_2(\C) $ in terms of analytic functions $f,g:\{a_0,a_1\}\rightarrow \C$  
	such that $f(a_0,a_1)=z_1$ and $g(a_0,a_1)=z_2$ for a general quadratic equation.
	\begin{claim}
	There is no construction of a quadratic formula using only finite field operations, analytic functions, and the coefficients of the polynomial.
	\end{claim}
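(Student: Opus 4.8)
The plan is to argue by contradiction using the monodromy/permutation mechanism that the paper has already set up. Suppose such analytic functions $f,g:\{a_0,a_1\}\to\C$ existed with $f(a_0,a_1)=z_1$ and $g(a_0,a_1)=z_2$ for the general monic quadratic $p(z)=z^2+a_1 z+a_0=(z-z_1)(z-z_2)$. The key contrast is between what a closed path in coefficient space does to an honest analytic (single-valued, or at worst finite-branch-point-free along the chosen loop) function, versus what it does to the roots. So first I would recall Vieta: $a_1=-(z_1+z_2)$ and $a_0=z_1 z_2$, so a loop in the $z_i$ determines a loop in the $(a_0,a_1)$-plane. Second, I would exhibit an explicit closed path: fix $z_1,z_2$ on a small circle of radius $\rho$ about a common center $c$, parametrized as $z_1(t)=c+\rho e^{\pi i t}$, $z_2(t)=c-\rho e^{\pi i t}$ for $t\in[0,1]$. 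At $t=1$ we have $z_1(1)=z_2(0)$ and $z_2(1)=z_1(0)$: the roots have swapped, i.e. the path induces the transposition $(1\,2)$ on $\{z_1,z_2\}$.

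Third — and this is the crucial computation — I would check that the induced path in coefficient space is \emph{closed}. Indeed $a_1(t)=-(z_1(t)+z_2(t))=-2c$ is constant, and $a_0(t)=z_1(t)z_2(t)=c^2-\rho^2 e^{2\pi i t}$, which at $t=0$ and $t=1$ both equal $c^2-\rho^2$; so $(a_0(t),a_1(t))$ is a genuine closed loop in $\mathrm{Poly}_2(\C)$ (it is a circle of radius $\rho^2$ about $c^2$ in the $a_0$-coordinate, winding once). Fourth, I would invoke analyticity: since $f$ and $g$ are built from finite field operations (addition, subtraction, multiplication, division) and analytic functions of $a_0,a_1$ — and crucially \emph{no radicals} — along our loop the only possible obstruction to single-valuedness is a pole of a denominator, which we avoid by choosing the center $c$ and radius $\rho$ generically (the finitely many forbidden values of $(c,\rho)$ form a proper analytic subset). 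Hence $f$ and $g$, being single-valued analytic on a neighborhood of the loop, return to their initial values at $t=1$: $f(a_0(1),a_1(1))=f(a_0(0),a_1(0))$ and likewise for $g$. But then $z_1(1)=f(a_0(1),a_1(1))=f(a_0(0),a_1(0))=z_1(0)$, contradicting $z_1(1)=z_2(0)\neq z_1(0)$ (the latter inequality holding whenever $\rho\neq 0$). This contradiction proves the claim.

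The main obstacle is making precise the assertion in the fourth step — that "analytic functions plus finite field operations, but no radicals" yields a class of functions that is closed under composition and single-valued along suitably generic loops. One has to be careful about what "analytic function" is allowed as a building block: a bona fide analytic function may itself have branch points (think $\log$), so single-valuedness along the loop is not automatic and must be secured by a genericity argument on the loop's location, or by restricting to functions analytic on a simply connected domain containing the loop. I would handle this by arguing that the set of "bad" base points $(a_0,a_1)$ — those lying on some branch locus or polar locus of one of the finitely many analytic functions appearing in $f$ or $g$ — is a proper closed analytic subset of $\mathrm{Poly}_2(\C)$, whose complement is connected, so a loop can be chosen inside it realizing the transposition; then monodromy around such a loop is trivial for $f,g$ but nontrivial for the roots. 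An alternative, cleaner route is to phrase everything in terms of the two-sheeted Riemann surface of $z\mapsto\sqrt{a_1^2-4a_0}$ (equivalently $\sqrt{(z_1-z_2)^2}$) and note that the monodromy representation of the fundamental group of the complement of the discriminant locus $\{a_1^2=4a_0\}$ onto $S_2$ is nontrivial, while it is trivial on any radical-free analytic function; I would mention this as the conceptual statement and use the explicit loop above for the hands-on verification.
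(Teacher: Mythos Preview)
Your argument is correct and follows essentially the same route as the paper: assume analytic $f,g$ exist, use Vieta to see that a continuous swap $z_1\leftrightarrow z_2$ induces a \emph{closed} loop in $(a_0,a_1)$, observe that single-valued $f,g$ then return to their initial values while the roots do not, and derive a contradiction. Your explicit parametrization of the swap, the direct verification that $a_0(t),a_1(t)$ close up, and the genericity discussion for avoiding poles/branch loci are elaborations the paper omits, but the skeleton is identical.
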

	\begin{proof}
	Suppose otherwise. Then, using \textit{Vieta's} formulae we find that the coefficients $a_0, a_1$ given by $a_0=z_1z_2$ and $a_1=-(z_1+z_2)$,
	which are symmetric expressions in $z_1,z_2$. 
	Starting from distinct points $z_1,z_2$ we can continuously move them until they change places $z_1\to z_2$, $z_2\to z_1$. 
	Under this motion, each of the coefficients $a_0,a_1$ follows a closed path and the functions $f(a_0,a_1)$ and $g(a_0,a_1)$ also follow closed paths. This, however, contradicts the assumption that $f$ and $g$ trace the roots $z_1,z_2$, which interchanged places. 
Therefore any formula requires the use of a multi-valued function mainly the \textit{quadratic formula}.  	
	\end{proof} 
\subsection{Radical, Functions, and a Commutator}\label{sec:commutator}
\
\\	
Before considering higher degree equations we must first address the fact that we are allowed, in general, to use multi-valued functions such as radicals in order to construct a formula for the roots of a polynomial equation. We first show that for certain types of closed paths and continuous functions, the paths that trace the values of $\sqrt[n]{\cdot}$ is a closed loop but at the same time they induce non-trivial permutations of the roots of any polynomial equation.    
\begin{claim}
Suppose that $\beta$ and $\gamma$ are two closed loops that start and end at the same point $a$. Then for any continuous function $f:\{a_0,\ldots a_4\}\rightarrow \C$ the five paths that trace the values of $\sqrt[5]{f(a_0,\ldots,a_4)}$ is a closed loop.  
\end{claim}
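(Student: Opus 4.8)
The plan is to recast the assertion as a monodromy computation: the five paths tracing $\sqrt[5]{f}$ are closed precisely when the monodromy of $\sqrt[5]{f}$ around the loop in question is trivial, and the point of having \emph{two} loops $\beta,\gamma$ is that the relevant loop is their commutator $\delta := \beta\gamma\beta^{-1}\gamma^{-1}$; because the monodromy of a fifth root is cyclic — hence abelian — it is automatically killed by this commutator. Throughout I assume, as the Definition preceding the claim tacitly does, that $f$ is non-vanishing along $\beta$ and $\gamma$, so that the five continuous branches $w_0,\dots,w_4$ with $w_k^5 = f$ are defined along $\delta$; the degenerate case in which $f\circ\delta$ meets the origin is disposed of at the end.

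First I would record the \textbf{winding-number description of the monodromy of a fifth root}. Let $\alpha$ be any loop at $a$ with $f\circ\alpha$ avoiding $0$, and write $f(\alpha(t)) = r(t)\,e^{i\theta(t)}$ with $r,\theta$ continuous. By the polar formula \eqref{polar} the $k$-th branch along $\alpha$ is $\sqrt[5]{r(t)}\,e^{\frac{i}{5}(\theta(t)+2\pi k)}$; traversing $\alpha$ once increases $\theta$ by $2\pi\,n(\alpha)$, where $n(\alpha)\in\Z$ is the winding number of $f\circ\alpha$ about the origin, so the terminal branches are obtained from the initial ones by the cyclic permutation $k\mapsto k+n(\alpha)\pmod 5$. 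Thus the permutation of the five sheets induced by $\alpha$ is $\tau^{\,n(\alpha)}$ for the standard $5$-cycle $\tau$, and it depends on $\alpha$ only through the integer $n(\alpha)$ — equivalently, the monodromy of $\sqrt[5]{f}$ factors through the winding-number homomorphism $\pi_1\!\bigl(\C\setminus\{0\}\bigr)\to\Z$, which is exactly the content of the Definition in \cref{sec:applyjava}.

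Next I would invoke the elementary additivity of the winding number — $n(\alpha_1\cdot\alpha_2) = n(\alpha_1)+n(\alpha_2)$ and $n(\alpha^{-1}) = -n(\alpha)$ — which here follows from the additivity of the change of argument together with the fact that $f$ is a genuine single-valued function, so that $f\circ(\alpha_1\cdot\alpha_2) = (f\circ\alpha_1)\cdot(f\circ\alpha_2)$. Applying this to $\delta = \beta\gamma\beta^{-1}\gamma^{-1}$ yields
$$n(\delta) = n(\beta) + n(\gamma) - n(\beta) - n(\gamma) = 0,$$
so the monodromy of $\sqrt[5]{f}$ around $\delta$ is $\tau^{0} = \mathrm{id}$. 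Therefore each branch $w_k$ returns to its starting value, i.e. each of the five paths tracing $\sqrt[5]{f(a_0,\dots,a_4)}$ along $\delta$ is a closed loop, as claimed.

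The step I expect to be the main obstacle is not the commutator cancellation — that is purely formal once the monodromy has been identified — but making the monodromy statement precise and uniform in $f$: one must verify carefully that the permutation of the five branches depends on the loop only through the winding number of its $f$-image (i.e. that a radical layer contributes only cyclic monodromy, no matter how complicated the single-valued $f$ beneath it is), and one must handle the degenerate configuration in which $f\circ\delta$ passes through $0$. The latter is dealt with by an arbitrarily small homotopy of $\beta$ and $\gamma$ rel the basepoint $a$ pushing their $f$-images off the origin — legitimate because the winding number, and hence the whole computation, is a homotopy invariant — or, equivalently, by simply restricting attention to loops lying in the open set $\{f\neq 0\}$, which is the natural domain on which the five branches are continuous in the first place.
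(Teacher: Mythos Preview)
Your proposal is correct and follows essentially the same approach as the paper: both arguments observe that the monodromy of a single radical $\sqrt[5]{f}$ along any closed loop is a cyclic permutation of the five branches, and since cyclic permutations of a fixed set commute, the commutator $[\beta,\gamma]=\beta\gamma\beta^{-1}\gamma^{-1}$ induces the identity. The paper states this in one line (``both $\beta$ and $\gamma$ define a cyclic permutation of the $5$ radicals which commute''), while you unpack it via the winding number homomorphism $n(\alpha)$ and the identity $n(\delta)=n(\beta)+n(\gamma)-n(\beta)-n(\gamma)=0$; this is the same mechanism made explicit, with the added bonus that you flag and dispose of the degenerate case $f\circ\delta$ meeting $0$, which the paper simply assumes away.
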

\begin{proof}
Let $z=f(a_0,a_1,a_2,a_3,a_4)$ be an analytic function in complex variables $a_0,\dots,a_4$, and suppose that
for each $j=0,\dots, 4$ we have two closed loops
\[
	\beta_j:[0,1]\to \mathbb{C},\qquad \gamma_j:[0,1]\to \C
\]
that start and end at some fixed $a_j$, and such that $f\circ \beta_.$ and $f\circ \gamma_.$ avoid $0$.
Perform the path 
\[
	[\beta, \gamma]=\beta\gamma\beta^{-1}\gamma^{-1}; \quad (\text{Commutator})
\] 
on $a_0,\dots,a_4$ and follow the $5$ paths that trace the values of
\[
	\sqrt[5]{f(a_0,\dots,a_4)}.
\]
These paths are closed loops because both $\beta$ and $\gamma$ define a cyclic permutation of the $5$ radicals which commute.  
\end{proof} 

We now consider the simplest case of \textit{Arnold's} argument. 
\begin{Theo}
For any Monic polynomial $p(z)\in {\rm Poly}_2(\C)$ the roots of $p(z)$ cannot be expressed in using an expression of the form: 
\begin{equation}\label{function}
z=g\left(\sqrt[n_1]{f_1(a_0,\dots,a_4)},\sqrt[n_2]{f_2(a_0,\dots,a_4)},\dots,\sqrt[n_k]{f_k(a_0,\dots,a_4)}\right)
\end{equation}	
for some analytic functions $f_1,\dots,f_k:\C^5\to \C$, $g:\C^k\to \C$, and $n,k\in \Z$.	 
\end{Theo}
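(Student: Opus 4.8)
\emph{Strategy.} The plan is to run Arnold's commutator trick, using the preceding Claim as a black box. Assume, for contradiction, that for a general polynomial of degree five the roots are given by an expression of the shape (\ref{function}), with $f_1,\dots,f_k:\C^5\to\C$ and $g:\C^k\to\C$ entire. Fix a generic base point $a=(a_0,\dots,a_4)$, so that the associated polynomial has five distinct roots and no $f_i$ vanishes at $a$; single out one root
\[
	z_1=g\!\left(\sqrt[n_1]{f_1(a)},\dots,\sqrt[n_k]{f_k(a)}\right)
\]
and fix a choice of branch for each radical at $a$. I would then construct a closed loop in ${\rm Poly}_5(\C)$ based at $a$ along which (i) the five roots undergo a \emph{non-trivial} permutation, so that $z_1$ is carried to a different root, while (ii) every radical $\sqrt[n_i]{f_i}$, with its chosen branch, returns to its initial value, whence $g$ applied to them returns to $z_1$. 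Items (i) and (ii) contradict each other.

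\emph{Choice of loop.} The loop I would use is a commutator $[\beta,\gamma]=\beta\gamma\beta^{-1}\gamma^{-1}$. The input is that the monodromy group of the correspondence ``coefficients $\mapsto$ unordered set of roots'' over ${\rm Poly}_5(\C)\setminus\Delta$, with $\Delta$ the locus of polynomials having a repeated root, is the full symmetric group $S_5$ (equivalently, the fundamental group of that complement --- the braid group on five strands --- surjects onto $S_5$); since $S_5$ is non-abelian there are loops $\beta,\gamma$ whose images do not commute, e.g.\ small loops realizing overlapping transpositions $(1\,2)$ and $(2\,3)$, for which $[(1\,2),(2\,3)]$ is a $3$-cycle. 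Then $[\beta,\gamma]$ maps to a non-trivial permutation of the roots, which is (i). The extra requirement forced by the Claim is that every $f_i$ stay off $0$ along $\beta$ and $\gamma$; this costs nothing, because each $Z(f_i)$ (for $f_i\not\equiv 0$) is a proper analytic subset of ${\rm Poly}_5(\C)$, of complex codimension $\ge 1$ and hence real codimension $\ge 2$, so loops can be perturbed off $\bigcup_i Z(f_i)$ and $\pi_1$ of the complement of $\Delta\cup\bigcup_i Z(f_i)$ still surjects onto $S_5$; choose $\beta,\gamma$ there.

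\emph{Closing the argument.} For (ii): for each $i$ the paths $f_i\circ\beta$ and $f_i\circ\gamma$ are loops in $\C\setminus\{0\}$, and the winding number about $0$ is additive under concatenation and reverses under path reversal, so $f_i\circ[\beta,\gamma]$ has total winding number $w_\beta+w_\gamma-w_\beta-w_\gamma=0$ about $0$. Hence the monodromy of $\sqrt[n_i]{\,\cdot\,}$ along $f_i\circ[\beta,\gamma]$ is trivial --- equivalently, the two cyclic permutations of the $n_i$ branches induced by $\beta$ and $\gamma$ commute and cancel, just as in the Claim --- so the chosen branch of $\sqrt[n_i]{f_i}$ returns to its starting value. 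Therefore the $k$-tuple of radical values traces a loop in $\C^k$; since $g$ is a genuine single-valued (entire) function, $g$ of this loop returns to $z_1$, i.e.\ the right-hand side of (\ref{function}) comes back to $z_1$, while by (i) the left-hand side has been moved to a different root. This contradiction shows no expression of the form (\ref{function}) reconstructs the roots.

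\emph{The hard part.} The crux is the middle step: showing that the requirement that $\beta,\gamma$ avoid all the branch loci $Z(f_i)$ does not cost us the ability to realize non-commuting monodromy on the roots --- this is exactly what the codimension count delivers. (Both a commutator and non-abelian monodromy are essential here: a single loop, or a commutator in a setting where the relevant monodromies were abelian, would not produce the contradiction, since a commutator of \emph{cyclic} radical-monodromies always cancels while a single one need not.) Everything else --- the structure of the monodromy of $w=\sqrt[n]{z}$, additivity of winding numbers, and the tracking of branches --- is routine or is precisely the preceding Claim.
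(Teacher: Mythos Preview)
Your argument is correct and follows essentially the same route as the paper: both use Arnold's commutator trick, choosing two loops whose root-monodromies do not commute so that $[\beta,\gamma]$ permutes the roots nontrivially while each $\sqrt[n_i]{f_i}$ closes up (the paper picks the $3$-cycles $(1\,2\,3)$ and $(3\,4\,5)$ rather than your transpositions, but this is immaterial). Your added codimension-$2$ argument for perturbing $\beta,\gamma$ off the zero sets $Z(f_i)$ is a point the paper leaves implicit, so your write-up is in fact slightly more careful on that technicality.
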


\begin{proof}
Fix distinct roots $z_1,\dots, z_5 \in \C$ of the polynomial $p(z)$. Construct continuous paths such that 
\[
	\hat{\beta}:(z_1,z_2,z_3,z_4,z_5) \mapsto (z_2,z_3,z_1,z_4,z_5)
\]
and 
\[
	\hat{\gamma}:(z_1,z_2,z_3,z_4,z_5) \mapsto (z_1,z_2,z_4,z_5,z_3)
\]
and denote by $\beta$, $\gamma$ the corresponding paths of the coefficients $a_0,\dots,a_4$ of $p(z)$.
Then as $\hat{\beta}$ permutes the roots then $\beta_j$ follows a closed loop by \textit{Vieta's} formulae. Similarly each $f_1(a_0,\ldots, a_4)\ldots f_k(a_0,\ldots, a_4)$ follow a closed loop under this motion and the paths $\sqrt[n_i]{f_i}$ amount to a cyclic permutation. As the argument is completely symmetrical we conclude the same for $\hat{\gamma}$. Hence, following $\hat{\beta}\hat\gamma\hat{\beta}^{-1}\hat{\gamma}^{-1}$, each of the paths of $\sqrt[n_i]{f_i}$ are closed. Therefore we have that $g(\sqrt[n_1]{f_1},\ldots \sqrt[n_i]{f_i})$ also follow a closed loop, but the roots $z_1,\ldots ,z_5$ realized a non-trivial permutation which means that $g(\sqrt[n_1]{f_1},\ldots \sqrt[n_i]{f_i})$ does not trace the roots $z_1,\ldots, z_5$.  
\end{proof}

\subsection{Necessity of Nested Radicals to Solve Cubic Equations} \label{sec:cubic}
\
\\
We now address the fact that a closed formula for the roots of a third degree polynomial requires the use of one level of nested radicals. On this screen the user can visualize the need for nested radicals in order to solve third degree polynomial equations\footnote{Screen shot of the JavaScript simulation for the case of a cubic polynomial (\figref{cubic}).}. We again construct closed paths on the set of roots of an arbitrary third degree polynomial equation, and apply the commutator of these paths so that we induce a non-trivial permutation on the set of roots, but both the coefficients and the values of these under the hypothetical formula that does not involve more than one level of nested radicals return to their original positions.   
\begin{figure}[H]
	\centering
	\includegraphics[scale=0.5]{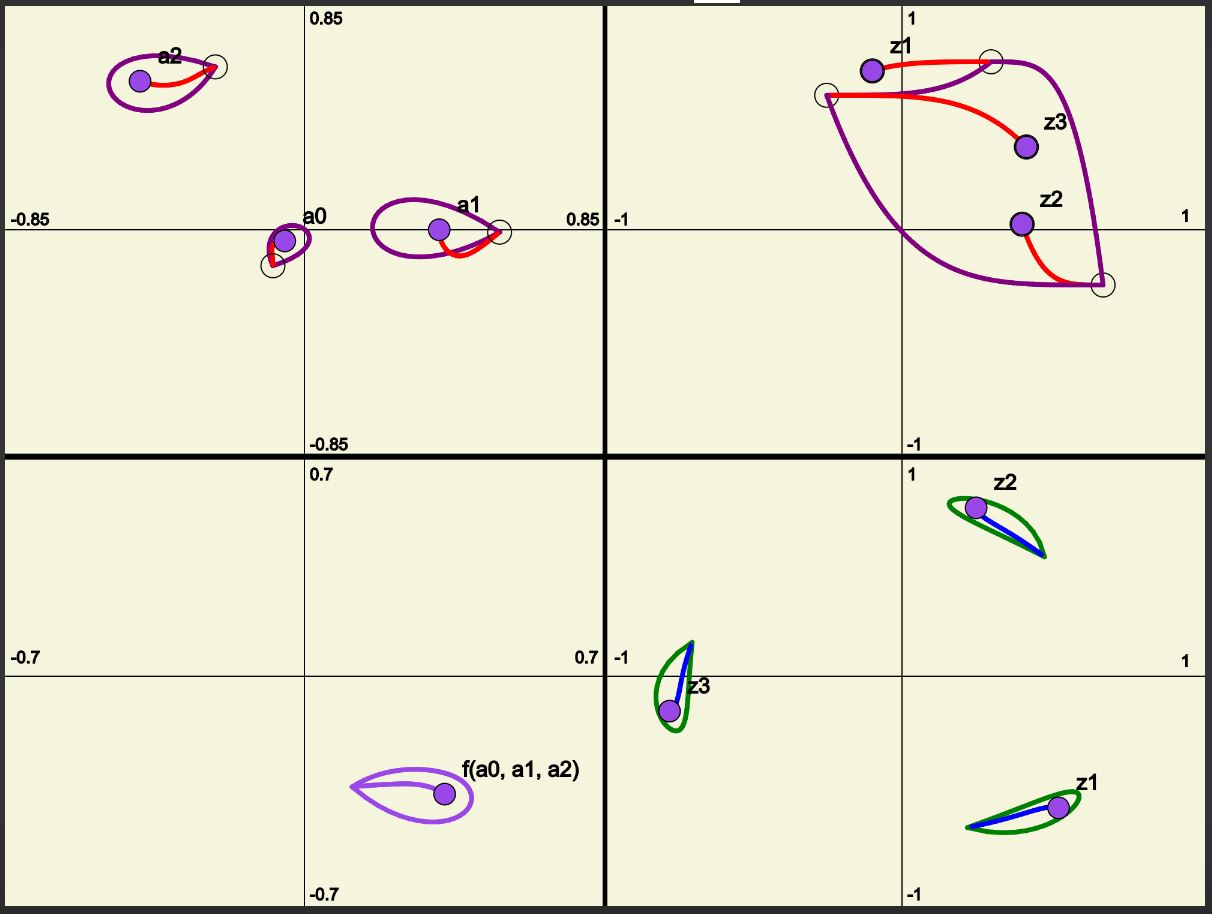}
	\caption[Cubic]{Case for Monic $p(z)$ in ${\rm Poly}_3(\mathbb{C})$}
	\label{fig:cubic}
	\end{figure} 

\begin{claim}
Any formula for the roots of a Monic polynomial $p(z) \in {\rm Poly}_3(\C)$ cannot be expressed by a formula $f:\{a_0,a_1,a_2\}\rightarrow \C^3$ for $f$ an analytic function and no nested radicals.
\end{claim}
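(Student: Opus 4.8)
The plan is to mimic the commutator argument already used for the quadratic and degree-five cases, but now measuring carefully how many nested layers of radicals a candidate formula for the cubic can survive. First I would set up the target: suppose for contradiction that the three roots $z_1,z_2,z_3$ of a general $p(z)\in{\rm Poly}_3(\C)$ are given by expressions $z_i=g_i\!\left(\sqrt[n_1]{f_1(a_0,a_1,a_2)},\dots,\sqrt[n_m]{f_m(a_0,a_1,a_2)}\right)$ where each $f_j$ is analytic in the coefficients and, crucially, contains \emph{no radicals} (one single layer). The coefficients $a_0,a_1,a_2$ are symmetric functions of the roots by Vieta, so any continuous motion of $z_1,z_2,z_3$ that returns the multiset $\{z_1,z_2,z_3\}$ to itself produces closed loops $\beta,\gamma$ in $\C^3$-coefficient space, and hence each $f_j(a_0,a_1,a_2)$ traces a closed loop avoiding $0$.

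The key step is to exploit the structure of $S_3$ and its commutator subgroup. I would choose two loops $\hat\beta,\hat\gamma$ in the space of root-triples realizing two transpositions (or a transposition and a $3$-cycle) whose commutator $[\hat\beta,\hat\gamma]=\hat\beta\hat\gamma\hat\beta^{-1}\hat\gamma^{-1}$ induces a \emph{nontrivial} permutation of $z_1,z_2,z_3$ — possible precisely because $S_3$ is non-abelian, so $[S_3,S_3]=A_3\neq\{e\}$. Then I would invoke the commutator claim from \cref{sec:commutator}: since each $f_j$ is radical-free, the loops $\beta$ and $\gamma$ each induce a cyclic permutation on the $n_j$ values of $\sqrt[n_j]{f_j}$, and any two cyclic permutations of a fixed finite set of radical-values commute, so the commutator $[\beta,\gamma]$ leaves every value $\sqrt[n_j]{f_j}$ on a \emph{closed} loop. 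Consequently $g_i$ of these closed loops is itself a closed loop, so every purported $z_i$ returns to its starting value — contradicting that $[\hat\beta,\hat\gamma]$ permuted $z_1,z_2,z_3$ nontrivially. Therefore no such single-layer formula exists, and at least one level of nested radicals is required.

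The main obstacle, and the point I would be most careful about, is justifying the claim that a \emph{single} layer of radicals over analytic functions of the coefficients can only produce cyclic (hence mutually commuting) monodromy under the loops $\beta,\gamma$. The clean statement is that the monodromy group attached to a one-layer expression, as a subgroup of the product of symmetric groups acting on the finitely many branch-values of the $\sqrt[n_j]{f_j}$, is generated by elements that are products of disjoint cycles and — when we restrict attention to the two specific loops $\beta$ and $\gamma$ coming from our chosen motions of the roots — these generators commute because the radicand $f_j$, being radical-free, returns to its base value along each of $\beta$, $\gamma$, so the only ambiguity is a deck transformation of $w\mapsto w^{n_j}$, which is cyclic. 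I would phrase this as a lemma (essentially a repackaging of the commutator claim of \cref{sec:commutator} together with the observation that deck transformations of $z\mapsto\sqrt[n]{z}$ form the abelian group $\Z/n\Z$), prove it once, and then the contradiction for the cubic follows formally, exactly as in the analogous degree-two and degree-five arguments. The degree count — one commutator kills one layer of radicals, so degree $3$ needs $\geq 1$ nesting and, pushed further, degree $5$ needs more than the solvable tower allows — is the conceptual thread tying this claim to the main theorem.
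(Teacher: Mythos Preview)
Your proposal is correct and follows essentially the same route as the paper's own proof: assume a one-layer formula of the shape \eqref{function}, move the roots along paths whose commutator realizes a nontrivial element of $[S_3,S_3]=A_3$, and observe that since the monodromy of each $\sqrt[n_j]{f_j}$ along $\beta$ and along $\gamma$ is a cyclic rotation (the paper phrases this as a rotation by some angle $\theta$), these commute and the commutator path returns every radical value to itself, contradicting the nontrivial permutation of the $z_i$. Your write-up is more explicit than the paper's about \emph{why} the single-layer monodromies commute (namely, the deck group of $w\mapsto w^{n_j}$ is $\Z/n_j\Z$, hence abelian), which is a welcome clarification of exactly the step the paper compresses into ``$\Delta\theta=0$''.
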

\begin{proof}
Suppose otherwise, and consider a general Monic polynomial $p(z)\in {\rm Poly}_3(\mathbb{C})$ with roots $z_1,z_2,z_3$. As in (\ref{function}), no such formula can express the roots of $p(z)$. We can further construct paths that cyclically permute the roots $z_1,z_2,z_3$ while inducing closed loops for each coefficient of $p(z)$ and for the value of $f$. The paths of $\sqrt[n]{f}$ will either follow a closed loop or rotate by some angle $\theta$. Applying the commutator the values of $\sqrt[n]{f}$ undergo a rotation $\Delta \theta=0$, while inducing a non-trivial permutation of the roots $z_1,z_2,z_3$, contradiction. Therefore any formula requires nested radicals, given by \textit{Cardano's} formula.
\end{proof}
\subsection{Commutators in $S_n$}\label{sec:symmetricgroup}
\
\\
Our ability to solve algebraic equations using radicals is dependent on the solubility of special classes of groups. 
$S_n$ is the group of all permutations. $A_n$ is the group of all even permutations, where $A_n$ forms a subgroup of $S_n$. The reader can verify the following properties of the groups $S_n$ and $A_n$: 
\begin{enumerate}
	\item If $H$ is a subgroup of a soluble group $G$, then $H$ is soluble.
	\item If a group $G$ is not commutative and the only subgroups are unit element and $G$ itself, then $G$ is not soluble.
	\item For $n\geq 5$, $S_n$ contains a subgroup isomorphic to $A_5$.  
	\item Let $G$ be a finite group. Then $G$ is soluble if and only if there exists $n\in \mathbb{Z}$ such that $G^{(n)}=\{1\}$ (The $nth$ commutator group).
\end{enumerate}
The above properties allow us to conclude that $S_2$ is a commutative group and therefore soluble; $S_3$ is also a soluble group as $[[a,b],[c,d]]=\{e\}$ where $e$ is the identity element; $S_4$ is also a soluble group as $[[[a,b],[c,d]],[[a,b],[c,d]]]=\{e\}$, and lastly by properties $(1)$ and $(3)$ it follows that for $n\geq 5$, $S_n$ is not a soluble group.
\subsection{Quartic Equations}\label{sec:quartic}
\
\\
As in the case for third degree polynomial equations, there does exists a long and complicated formula that expresses the roots of an arbitrary degree four polynomial equation in terms of nested radicals, field operations, analytic functions, and the coefficients of the polynomial. 
\begin{figure}[htp]
	\centering
	\includegraphics[scale=0.5]{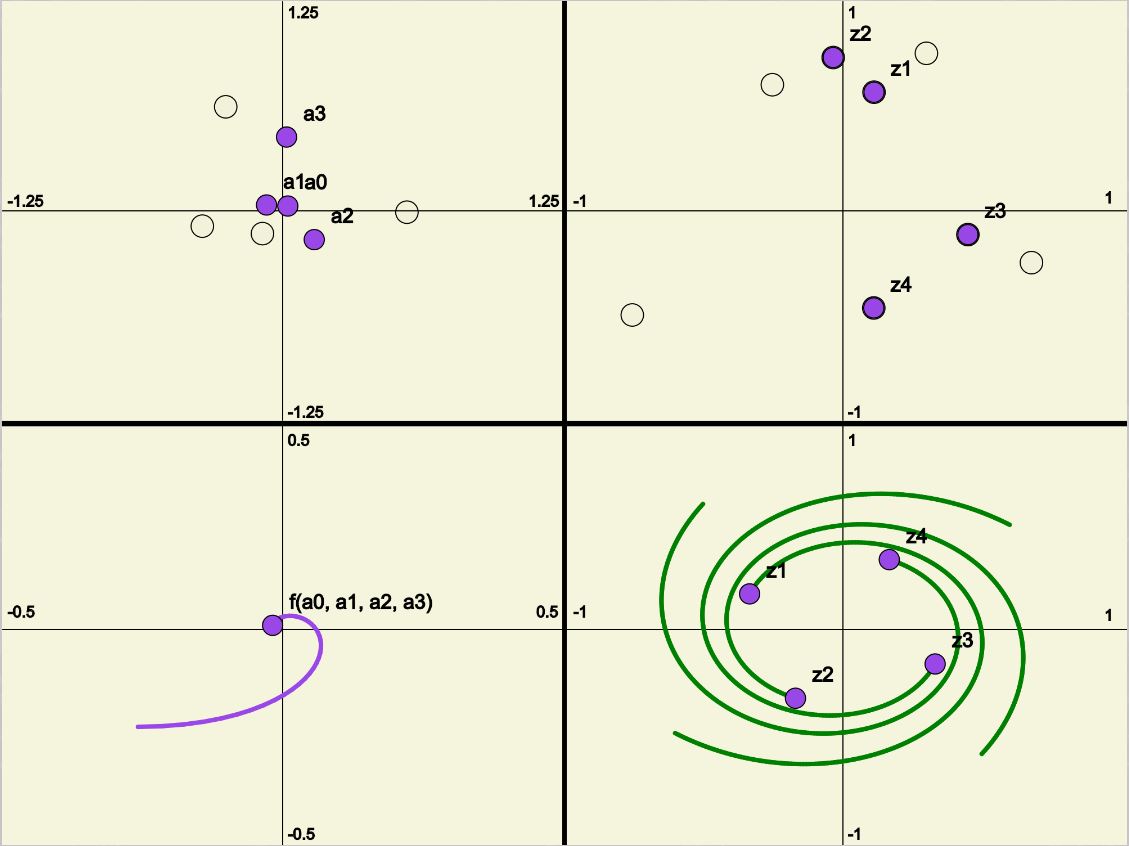}
	\caption[Quartic Case]{Case for Monic $p(z)$ in ${\rm Poly}_4(\C)$}
	\label{fig:quartic}
	\end{figure}
We can visually demonstrate that for a general fourth degree polynomial equation, by the fundamental theorem of algebra, there are in general four solutions $z_1,z_2,z_3,z_4$. Defining the set $A=\{z_1,z_2,z_3,z_4\}$ to be the set of roots of a general fourth degree equation, then what the visualization demonstrates is that we can rule out hypothetical formulas through the following process \footnote{Screen shot of the JavaScript simulation for the case of a fourth degree polynomial (\figref{quartic})}: 

There are 24 permutations of the set $A$ from which there are 12 non-trivial permutations of commutators of these 24 permutations. Taking a commutator of these 12 permutations we find that there are 4 non-trivial permutations of commutators of commutators, and finally taking commutators of commutators of commutators we arrive at a trivial permutation. Thus, this shows that for degree four polynomial equations any formula that will express the roots of this polynomial will require three levels of nested roots.
\subsection{Arnold's Theorem and the Impossibility of the Quintic in Radicals}\label{sec:main}
\begin{Theo}[Arnold's Theorem]
The Monodromy of the algebraic function $x(a)$ defined by the quintic equation $x^5+ax+1=0$ is the non-soluble group of the 120 permutations of five roots. That is, no function having the same topological branching type as $x(a)$ is re presentable as a finite combination of elementary functions and radicals \cite{MR2110624}. 
\end{Theo}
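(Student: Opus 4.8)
The plan is to compute the monodromy group of the algebraic function $x(a)$ explicitly, show that it is all of $S_5$, and then invoke the general structural principle already foreshadowed by the commutator arguments of \cref{sec:commutator} and \cref{sec:cubic}: any function built from rational and elementary functions by finitely many field operations, compositions, and radicals has \emph{soluble} monodromy group. The two halves collide to give the impossibility.

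First I would locate the branch points of $x(a)$. The function ramifies exactly where $p_a(x)=x^5+ax+1$ has a repeated root, i.e.\ where $p_a$ and $p_a'=5x^4+a$ share a zero. Setting $a=-5x^4$ and substituting gives $-4x^5+1=0$, so $x^5=1/4$ and $a^5=-3125/256$; hence there are exactly five branch points $a_1,\dots,a_5$, the fifth roots of $-3125/256$, equally spaced on a circle about the origin — consistent with the evident $\Z/5$ symmetry $(x,a)\mapsto(\zeta^{-1}x,\zeta a)$, $\zeta=e^{2\pi i/5}$, of the equation. Since $p_a''(x)=20x^3$ does not vanish at the critical point $x=-5/(4a)\ne 0$, each $a_j$ carries a \emph{simple} double root: exactly two of the five sheets come together there, so a small counterclockwise loop around $a_j$ induces a transposition $g_j$ of the five roots, in the spirit of the parking-garage picture of \cref{sec:idea}.

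Next I would show that $G=\langle g_1,\dots,g_5\rangle$ equals $S_5$. The plane curve $\{x^5+ax+1=0\}$ is rational — parametrised by $x\mapsto a=-(x^5+1)/x$ — hence irreducible, so its five sheets are connected by analytic continuation and $G$ acts transitively on $\{1,\dots,5\}$. A transitive subgroup of $S_n$ generated by transpositions must equal $S_n$: the transpositions are the edges of a connected graph on $n$ vertices, and those along any spanning tree already generate $S_n$. Therefore $G=S_5$. By the properties collected in \cref{sec:symmetricgroup}, $S_5$ contains the simple non-abelian group $A_5$ and is therefore not soluble; equivalently its derived series stabilises at $S_5^{(k)}=A_5\ne\{1\}$ for every $k$, which is precisely why no finite depth of nested radicals can suffice.

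Finally, the structural half. I would prove by induction on the construction that if a multivalued function $F$ is obtained from rational and elementary functions by finitely many field operations, compositions, and root extractions $\sqrt[n]{\,\cdot\,}$, then its monodromy group is soluble: the building blocks have abelian (cyclic or trivial) monodromy; a field operation or composition of $f$ and $g$ has monodromy a subquotient of a fibre product of the monodromy groups of $f$ and $g$, hence soluble when they are; and passing from $f$ to $\sqrt[n]{f}$ enlarges the monodromy group only by an abelian kernel, fitting in $1\to A\to\widetilde G\to G\to 1$ with $A$ abelian — this is exactly the ``a closed loop stays closed under the commutator of closed loops'' phenomenon of \cref{sec:commutator}. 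Since the monodromy group depends only on the topological branching type, no function with the same branching type as $x(a)$ can be so represented: its monodromy group would be simultaneously soluble and equal to $S_5$, a contradiction. I expect the main obstacle to be this last step — setting up the monodromy group of an arbitrary ``formula'' carefully enough that the short exact sequence for $\sqrt[n]{\,\cdot\,}$ and the fibre-product bound for the field operations are genuinely rigorous, so that solubility propagates through every admissible construction; by comparison the discriminant computation, the identification of the local monodromies as transpositions, and the passage to $S_5$ are routine.
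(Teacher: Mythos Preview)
The paper does not prove this theorem at all: it is stated with a citation to \cite{MR2110624} and no argument is supplied. Your proposal therefore goes well beyond what the paper does, and the outline you give is the standard, correct route to the result. The discriminant computation is right (including the slick identity $x=-5/(4a)$ at the double root, obtained from $x^5=1/4$ and $a=-5x^4$), the local monodromies are indeed transpositions because $p_a''\ne 0$ there, the rational parametrisation $a=-(x^5+1)/x$ gives irreducibility and hence transitivity, and a transitive subgroup of $S_5$ generated by transpositions is all of $S_5$.

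Your own diagnosis of the difficulty is accurate: the genuinely delicate part is the ``structural half,'' showing that every function assembled from elementary functions and radicals has soluble monodromy. The commutator heuristics in \cref{sec:commutator} and \cref{sec:cubic} are the intuition, but turning them into an honest induction requires fixing what ``monodromy group of a formula'' means (a representation of the fundamental group of a punctured base on the fibre of values), checking that this is invariant under the branching type, and then verifying the two closure properties you name: that field operations and composition embed the monodromy into a subquotient of a product, and that adjoining $\sqrt[n]{\,\cdot\,}$ yields an extension with abelian kernel. None of this is deep, but each step needs a careful statement; once that bookkeeping is in place, solubility propagates exactly as you say and collides with $S_5\supset A_5$.
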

\subsection{Impossibility of the Quintic in Radicals}\label{sec:impossibility}
\
\\
In the final stage of our visualization we demonstrate the impossibility of constructing a formula that expresses the roots of a general degree five or higher polynomial equation in terms of analytic functions, finite field operations, and finite levels of nested radicals. By following the same processes as in previous cases, we construct closed paths such that their commutator induces non-trivial permutations of the set of roots, while the coefficients of the polynomial and their image under any analytic function with finite levels of nested radicals follow a closed loop \footnote{Screen shot of the JavaScript simulation for the case of a fifth degree polynomial (\figref{quintic})}.   
\begin{claim}
There does not exist a formula for the roots of a degree five polynomial equation built out a finite combination of analytic functions, field operations, and any level of nested radicals. 
\end{claim}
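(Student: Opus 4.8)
The plan is to upgrade the one-level arguments of \cref{sec:commutator,sec:cubic} to arbitrary nesting depth, by matching the number of nested radicals in a hypothetical formula against the length of the derived series of the monodromy group of the roots. (Alternatively one may specialize the general quintic to the family $x^5+ax+1=0$ and invoke Arnold's Theorem of \cref{sec:main} almost verbatim; I prefer the self-contained route below.) Suppose for contradiction that some expression
\[
 z=g\bigl(\sqrt[n_1]{f_1(a_0,\dots,a_4)},\dots,\sqrt[n_m]{f_m(a_0,\dots,a_4)}\bigr),
\]
assembled from finitely many analytic functions, field operations, and radicals nested to some finite depth $k$, reconstructs a root — say $z_1$ — of the general monic quintic $p$ from its coefficients $a_0,\dots,a_4$. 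Set $D_0=S_5$ and $D_j=[D_{j-1},D_{j-1}]$, so that $\{D_j\}$ is the derived series of $S_5$. The only group theory needed is property $(4)$ of \cref{sec:symmetricgroup} together with the simplicity of $A_5$, which give $D_1=A_5$ and $D_j=[A_5,A_5]=A_5$ for all $j\ge 1$; in particular $D_k\neq\{1\}$.

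The core step is the following assertion, proved by induction on $d$: \emph{if a subexpression $h$ of the formula involves radicals nested to depth at most $d$, then for any loop $\delta$ in the space of quintics with distinct roots, avoiding the finitely many (lower-dimensional, hence avoidable) branch loci of the functions and radicals occurring in $h$ and realized as a product of $d$-fold iterated commutators of such loops, the value of $h$ returns to its initial position when $\delta$ is traversed.} For $d=0$, $h$ is an analytic function of $a_0,\dots,a_4$, hence single-valued, so it returns along any loop. For the step, write $h=g(\sqrt[n_1]{f_1},\dots,\sqrt[n_m]{f_m})$ with each $f_i$ of depth at most $d-1$, and let $\delta=\prod_t[\beta_t,\gamma_t]$ with each $\beta_t,\gamma_t$ a $(d-1)$-fold iterated commutator of such loops. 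By the inductive hypothesis every $f_i$ returns to its initial value along each $\beta_t$ and each $\gamma_t$; hence, by the radical-monodromy property recorded in \cref{sec:applyjava}, each $\sqrt[n_i]{f_i}$ undergoes a cyclic permutation of its $n_i$ branches along $\beta_t$ and along $\gamma_t$, and since cyclic permutations of one and the same set of branches commute, $\sqrt[n_i]{f_i}$ returns to its original branch along each commutator $[\beta_t,\gamma_t]$, and therefore along $\delta$. Then $g$, evaluated on arguments that have all returned, returns as well. This is exactly the mechanism of \cref{sec:commutator} applied $d$ times in succession.

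To finish, choose any non-identity $\sigma\in D_k=A_5$, write it as a product of $k$-fold iterated commutators of elements of $S_5$, and realize each innermost braid by physically moving the distinct roots $z_1,\dots,z_5$ of $p$ (keeping them distinct and in general position with respect to all the branch loci), transporting everything to ${\rm Poly}_5(\C)$ by \textit{Vieta's} formulae. This builds a loop $\delta$ of coefficients, of the form required by the inductive assertion with $d=k$, whose induced permutation of the roots is $\sigma$. The assertion applied to $h=$ the whole formula shows that the value of the formula returns to $z_1$ after $\delta$, while the root in the first slot has been moved by $\sigma\neq\mathrm{id}$ to another position; hence the formula does not track the root it claims to compute. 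As $k$ was an arbitrary finite nesting depth, no such formula exists. The step I expect to be the main obstacle is the inductive assertion itself: one must be able to choose the loops $\beta_t,\gamma_t$ at every level so that \emph{all} the intermediate functions avoid their branch loci simultaneously, and one must track each radical branch-by-branch rather than as an unordered set, so that the commuting-cyclic-permutations step genuinely applies — this is precisely the point at which a fully rigorous treatment needs the Riemann-surface and monodromy-group machinery sketched in \cref{sec:idea}. By contrast the group theory ($D_k=A_5$) and the realizability of the root braids are routine, the former being the only place the degree $5$ enters.
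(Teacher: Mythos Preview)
Your proposal is correct and follows essentially the same route as the paper: both arguments match the nesting depth of a hypothetical formula against iterated commutators and invoke the non-solubility of $S_5$ (your explicit derived-series computation $D_k=A_5$ is exactly what the paper's one-line appeal to ``$S_5$ is not soluble'' encodes). Your version spells out the induction on depth and the branch-locus caveat that the paper leaves implicit, but the mechanism is the same.
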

\begin{figure}[htp]
	\centering
	\includegraphics[scale=0.5]{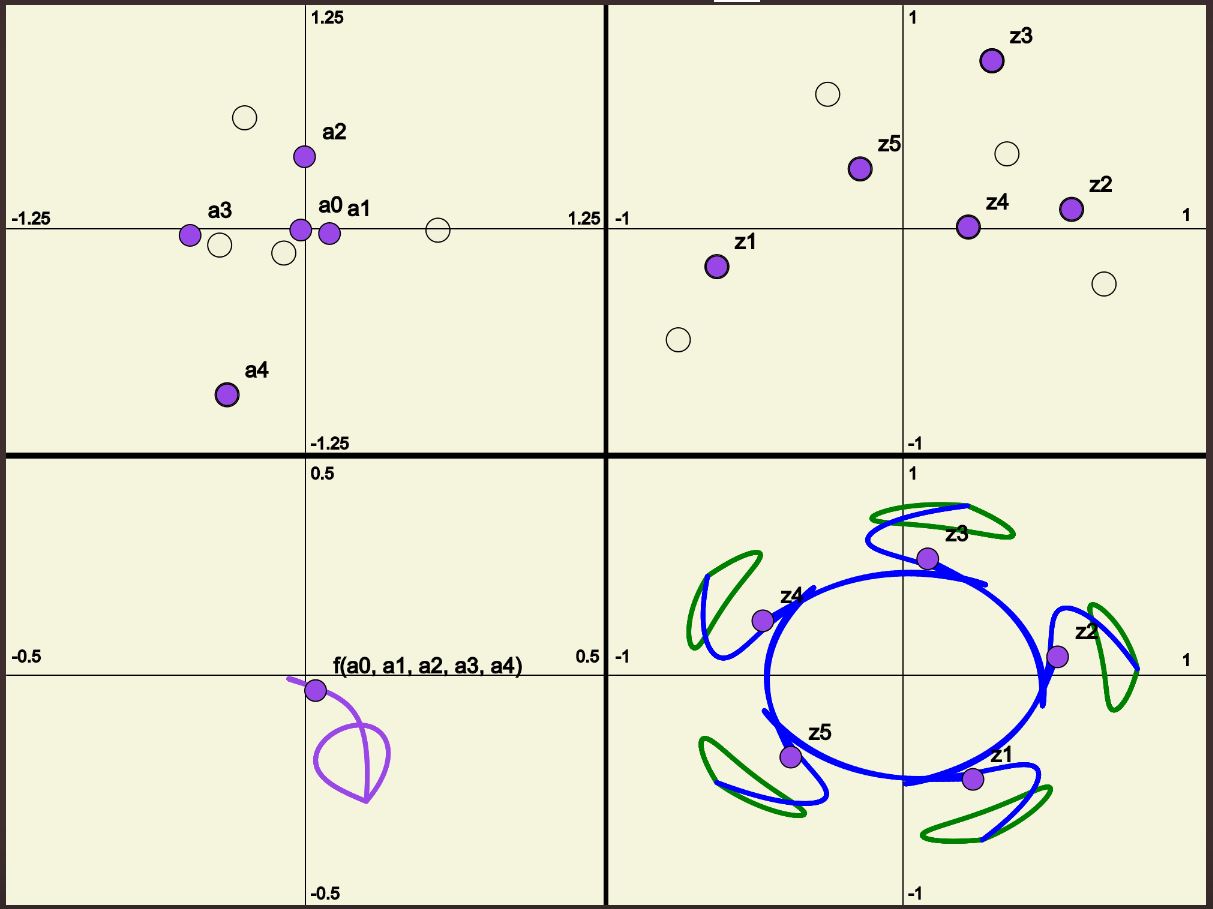}
	\caption{Case for Monic $p(z)$ in ${\rm Poly}_5(\mathbb{C})$}
	\label{fig:quintic}
	\end{figure} 
\begin{proof}
Suppose otherwise, and let $A=\{a_1,\ldots\,a_4\}$ be the set of coefficients of $p(z)\in$ ${\rm Poly}_5(\mathbb{C})$. Then, there exist functions $f_1,\ldots, f_k:A\rightarrow \mathbb{C}^5$ that expresses the roots of a general Monic polynomial $p(z)\in {\rm Poly}_5(\mathbb{C})$. By previous case there must be at least $N\geq 3$ levels of nested radicals for some $N\in \N$. Thus, suppose we have an expression of $N$ levels of nested radicals. Since $S_5$ is not soluble, then we know that there exists continuous paths such that their commutator induces a non-trivial permutation of the roots, while both the coefficients of $p(z)$ and $f_i$, for $1\leq i\leq k$, follow a closed loop, a contradiction. Therefore, there is no formula for the roots of a general polynomial equation constructed entirely out of a finite combination of analytic functions, field operations, and nested radicals. 
\end{proof}

\section{Implementation of Javascript}\label{sec:javascript}

We now discuss some of the details and technicalities of how respective javascript libraries are implemented and how the application was, in general, built.    
\begin{enumerate}
\item How the p5.js library operates and works to create the animation?

$\bullet$ There are two functions in the p5.js library which act as the driver program of the application. The setup function which is run once before the program begins and the draw function which runs continuously throughout the program executing the lines of code within. This repetition of the draw function is what allows for the creation of this animation as every time the function is run the canvas object is reset but the data attributes update their information simulating movement. This is similar to how camera reels work as every frame is a still image but when these frames are run in succession to one another at a certain frame rate the images appear to be moving.

\item How the classes work with each other?

There are three classes in the program which all build off one another to create a motion in the canvas. These are the Point class, the Path class, and the Motion class.

$\bullet$ The Point class is what allows to plot the points on the complex plane. When an instance of the Point class is instantiated it takes in a complex number written in terms of real and imaginary components as well as the quadrant in which the point is to be drawn (I-IV). The real and imaginary component translates into real and imaginary pixel coordinates through a method defined within the class, pixelToPoint, relative to the quadrant specified and the quadrants respective scale. The pixel locations map to the canvas object and where the points are drawn through another method plotPoint, which draws an ellipse of a predefined size at a given pixel coordinate.

$\bullet$ The Path class is what allows movement between two locations of a given point. The class itself takes as parameters two complex roots and a quadrant which are then used to create two instances of Point objects. One point is used as a starting location while the other point is used as an ending location. There is a method in Path named startPath which begins the movement of the starting point to the ending point, this is accomplished by incrementing the real and imaginary pixel component of the start point by a small size for each iteration of the draw function until it reaches the end point. There is also a method named setEnd which is what allows these paths to take on new end points which allow the points to travel to different locations after they have reach their destination.
 
$\bullet$ The Motion class is the set of movements of all paths in a quadrant. An instantiation of this class takes as parameters n complex roots that are then used to build n different instances of the Path class each with a starting and ending point. This class is meant to act as a controller for all the paths within a set. The Motion class calls on methods defined in the Path class to coordinate movement between the n different points. This class can update all existing paths with new endpoints, begin movement of the points, and be able to plot the points with methods defined as setNewEnds, update, and plot, respectively. 

\item How do we implement Vieta's formula? 

A function that takes a list of complex numbers and calculate each part of Vieta's formulas storing each coefficient calculated into a list that then returns at the end of the function. The coefficients returned are used to create instances of the Point class which are then plotted on the second quadrant of the application using the method, plotPoint. As the solutions in the first quadrant move, the resulting data attributes of their real and imaginary components are updated and sent to this function and the coefficients are renewed appropriately with each repetition of the draw function.
We use a library from GitHub named Complex.js \cite{infusion_2019}.\\ 

\item From the coefficients how are the solutions calculated?\\

A function exists to take as a parameter the list of coefficients that is calculated from Vieta's formula. This function takes in the coefficients of the polynomial (which are calculated from symmetric expressions via Vieta) and traces the values of the function as the coefficients change in a continuous way. A new instantiation of the Point class is calculated using the resulting real and imaginary component and is then plotted on quadrant III of the application. The point is updated along with the coefficients from Vieta's formula as the draw function repeats.

\item The loop in quadrant IV? 

A function exists that takes as a parameter a single complex point obtained from quadrant III, calculates the roots of the complex point and plots them on Quadrant IV. This plotting follows a closed loop so that at the end of the induced permutation of the roots, the rotations end up where it started, but the roots themselves have exchanged places. These roots are found first by calculating the magnitude and the angle it makes with the real axis. In general, a loop is then iterated n times where the formula for nth distinct roots of $z$ is given by (\ref{polar}).
The resulting root is then converted into an instance of the Point class and plotted on Quadrant IV and updated appropriately as the draw function repeats.

\item How are the traces drawn?  

As the program runs, the real and imaginary components of instances of the Point class are continuously updated and likewise are their pixel counterparts. The pixel data attributes are stored every three frames into a list that grows as the program is run. A function is run on this list, named drawHistory, which iterates through each pixel and draws a line from one pixel location to its successor stopping before the final point.
\end{enumerate}
Although, the purpose of this application was to illustrate an abstract mathematical argument in a geometric and intuitive manner, we did, however, project everything onto the complex plane. Future work includes working in higher dimensions (three dimensional space or four dimensional space with coloring) as well as building a mathematical program that graphs the Riemann surface of the radical function; in order to illustrate in more generality the argument given by \textit{Arnold}. 
\section*{Acknowledgments}

We would like express our gratitude to Professor Alexander Furman for his immense commitment, dedication, and guidance in the creation of our project; to Professor David Dumas for giving us the opportunity to participate in the \textit{Mathematical Computing Laboratory} (MCL)\cite{mathematicalcomputinglaboratory}, his support, and encouragement throughout the creation of this project. We thank the \textit{Department of Mathematics, Statistics, and Computer Science} at the \textit{University of Illinois at Chicago} for their support. A special thanks to Professor Ramin Takloo-Bighash for his dedication towards the advancement of undergraduate students in the field of mathematics as well as for his guidance in the process of writing and submission of this paper.    

\bibliographystyle{plain}  
\bibliography{abelreferences}

\begin{thebibliography}{1}

\bibitem{mathematicalcomputinglaboratory}
Mathematical computing laboratory at uic.
\newblock \url{https://mcl.math.uic.edu/}.

\bibitem{MR2110624}
V.~B. Alekseev.
\newblock {\em Abel's theorem in problems and solutions}.
\newblock Kluwer Academic Publishers, Dordrecht, 2004.
\newblock Based on the lectures of Professor V. I. Arnold, With a preface and
  an appendix by Arnold and an appendix by A. Khovanskii.

\bibitem{alekseev2004abel}
Valerij~Borisovi{\v{c}} Alekseev.
\newblock {\em Abel's Theorem in Problems and Solutions: Based on the lectures
  of Professor VI Arnold}.
\newblock Springer Science \& Business Media, 2004.

\bibitem{infusion_2019}
Infusion.
\newblock infusion/complex.js, Apr 2019.
\newblock \url{https://github.com/infusion/Complex.js/}.

\bibitem{ostrander_morales_kalicki_2019}
Ryan Ostrander, Juan Morales, and Veronica Kalicki.
\newblock ryanostrander/abel-s-theorem, Apr 2019.
\newblock \url{https://github.com/ryanOstrander/Abel-s-Theorem}.

\end{thebibliography}

\end{document}